\begin{document}

\newtheorem{thm}{Theorem}[section]
\newtheorem{lthm}{Lost Theorem}
\renewcommand{\thelthm}{of Chazy \& Painlev\'e}
\newtheorem*{srp}{The Shelah reflection principle}
\newtheorem{claim}[thm]{Claim}
\newtheorem {fact}[thm]{Fact}
\newtheorem{con}[thm]{Conjecture}

\newtheorem*{thmstar}{Theorem}
\newtheorem*{prob}{Problem}
\newtheorem{prop}[thm]{Proposition}
\newtheorem{cor}[thm]{Corollary}
\newtheorem*{propstar}{Proposition}
\newtheorem {lem}[thm]{Lemma}
\newtheorem*{lemstar}{Lemma}
\newtheorem{conj}[thm]{Conjecture}
\newtheorem{question}[thm]{Question}
\newtheorem*{questar}{Question}
\newtheorem{ques}[thm]{Question}
\newtheorem*{conjstar}{Conjecture}
\newtheorem{fct}[thm]{Fact}
\theoremstyle{remark}
\newtheorem{rem}[thm]{Remark}
\newtheorem{exmp}[thm]{Example}
\newtheorem{cond}[thm]{Condition}
\newtheorem{np*}{Non-Proof}
\newtheorem*{remstar}{Remark}
\theoremstyle{definition}
\newtheorem{defn}[thm]{Definition}
\newtheorem*{defnstar}{Definition}
\newtheorem{exam}[thm]{Example}
\newtheorem*{examstar}{Example}
\newtheorem{assump}[thm]{Assumption}
\newtheorem{Thm}[thm]{Theorem}
\newtheorem{problem}[thm]{Problem}

\theoremstyle{plain}
\newtheorem{thmx}{Theorem}
\renewcommand{\thethmx}{\Alph{thmx}}
\newtheorem{corx}[thmx]{Corollary}

\newtheorem{innerimportantthm}{Theorem}
\newenvironment{importantthm}[1]
  {\renewcommand\theinnerimportantthm{#1}\innerimportantthm}
  {\endinnerimportantthm}
  
\newtheorem{innerimportantcor}{Corollary}
\newenvironment{importantcor}[1]
  {\renewcommand\theinnerimportantcor{#1}\innerimportantcor}
  {\endinnerimportantcor}

\def \ta {\tau_{\mathcal{D}/\Delta}}
\def \D {\Delta}
\def \DD {\mathcal D}

\newcommand{\gen}[1]{\left\langle#1\right\rangle}

\newcommand{\pd}[2]{\frac{\partial #1}{\partial #2}}

\newcommand{\codim}{\text{codim}\,}

\newcommand{\td}{\text{tr.deg.}}
\newcommand{\pp}{\partial }
\newcommand{\pdtwo}[2]{\frac{\partial^2 #1}{\partial #2^2}}
\newcommand{\od}[2]{\frac{d #1}{d #2}}
\def\Ind{\setbox0=\hbox{$x$}\kern\wd0\hbox to 0pt{\hss$\mid$\hss} \lower.9\ht0\hbox to 0pt{\hss$\smile$\hss}\kern\wd0}
\def\Notind{\setbox0=\hbox{$x$}\kern\wd0\hbox to 0pt{\mathchardef \nn=12854\hss$\nn$\kern1.4\wd0\hss}\hbox to 0pt{\hss$\mid$\hss}\lower.9\ht0 \hbox to 0pt{\hss$\smile$\hss}\kern\wd0}
\def\ind{\mathop{\mathpalette\Ind{}}}
\def\nind{\mathop{\mathpalette\Notind{}}}
\numberwithin{equation}{section}

\def\id{\operatorname{id}}
\def\Frac{\operatorname{Frac}}
\def\Const{\operatorname{Const}}
\def\spec{\operatorname{Spec}}
\def\span{\operatorname{span}}
\def\exc{\operatorname{Exc}}
\def\Div{\operatorname{Div}}
\def\cl{\operatorname{cl}}
\def\mer{\operatorname{mer}}
\def\trdeg{\operatorname{trdeg}}
\def\ord{\operatorname{ord}}

\newcommand{\m}{\mathbb }
\newcommand{\mc}{\mathcal }
\newcommand{\mf}{\mathfrak }
\newcommand{\is}{^{p^ {-\infty}}}
\newcommand{\QQ}{\mathbb Q}
\newcommand{\fh}{\mathfrak h}
\newcommand{\CC}{\mathbb C}
\newcommand{\RR}{\mathbb R}
\newcommand{\ZZ}{\mathbb Z}
\newcommand{\tp}{\operatorname{tp}}
\newcommand{\SL}{\operatorname{SL}}
\newcommand{\dcl}{\operatorname{dcl}}
\newcommand{\acl}{\operatorname{acl}}

%\subjclass[2010]{11F03, 12H05, 03C60}

\title{Not Pfaffian}

\author[J. Freitag]{James Freitag}
\address{James Freitag, University of Illinois Chicago, Department of Mathematics, Statistics,
and Computer Science, 851 S. Morgan Street, Chicago, IL, USA, 60607-7045.}
\email{jfreitag@uic.edu}
\thanks{J. Freitag is partially supported by NSF CAREER award 1945251.}

\date{\today}
\maketitle

\begin{abstract} This short note describes the connection between strong minimality of the differential equation satisfied by an complex analytic function and the real and imaginary parts of the function being Pfaffian. This connection combined with a theorem of Freitag and Scanlon (2017) provides the answer to a question of Binyamini and Novikov (2017). We also answer a question of Bianconi (2016). We give what seem to be the first examples of functions which are definable in o-minimal expansions of the reals and are differentially algebraic, but not Pfaffian. 
\end{abstract}

\section{Why Pfaffian?}
Pfaffian functions were introduced in \cite{hovanskii1980class} where Khovanskii showed that the class has strong finiteness properties. For instance, Khovanskii exhibits an effective bound on the number of zeros in a system of equations involving Pfaffian functions. Later, the strong finiteness properties of this class of functions played an important role in model completeness results for o-minimal expansions of the real field \cite{wilkie1996model,MR1435773}. Any algebraic function is Pfaffian on a suitable domain, but there are two well-known ways to see that a transcendental function is \emph{not Pfaffian}: 
\begin{itemize} 
\item The function is differentially transcendental.\footnote{Sometimes this property is also called being hypertranscendental or transcendentally transcendental \cite{doi:10.1080/00029890.1989.11972282}. The Gamma-function is differentially transcendental by a classical theorem of H\"older \cite{holder1886ueber} and its restriction to $(0,\infty)$ is definable in an o-minimal expansion of the reals \cite{dries2000field}.} 
\item The function violates the strong finiteness properties of the class.\footnote{A popular example is provided by $\sin (x)$ with domain the real line. The violation of the finiteness principles of Khovanskii in this case also yields a violation of o-minimality. In fact, in any example known to the author, examples shown to be violating the finiteness principles of Khovanskii's theorem are strong enough violations to also yield a violation of o-minimality. Because of this, it is not surprising that there is no currently known function which is differentially algebraic, definable in an o-minimal expansion of the real field, and not Pfaffian.}
\end{itemize}

This manuscript introduces a third way of showing a function is not Pfaffian. Our work is based on a very simple idea - Pfaffian functions are built using solutions to certain order one differential equations, while solutions to higher order strongly minimal differential equations can not satisfy order one differential equations. For the notion of $\m C$-Pfaffian, this summary gives nearly a complete indication of the proof that a solution to a higher order strongly minimal equation is not $\m C$-Pfaffian. There are only slightly more complications to describe when considering if the real and imaginary parts of the function are Pfaffian. These complications can already be seen in the work of Macintyre \cite{macintyre2008some}, who shows that meromorphic functions whose derivatives are algebraic have real and imaginary parts which are Pfaffian. Our main result shows that no analog of Macintyre's theorem exists for higher order differential equations.

Throughout, we will use Klein's $j$-function as a motivating example, often stating and explaining our results for the $j$-function before giving the more general result. Besides being a non-misleading example, the $j$-function was the original motivation for this manuscript. Restricted to its standard fundamental domain in the upper half plane, the real and imaginary parts of the $j$-function are definable in the o-minimal expansion of the real field given by the exponential and restricted analytic functions, $\m R_{an, exp}$. As such, the point counting methods of Pila and Wilkie have been applied to certain definable sets associated with the $j$-function. Applying these methods has been a major part of the proof of various open conjectures in diophantine geometry and transcendence theory \cite{PilaAO}. There are several potential sources of ineffectivity in this general approach to the special points problems, but one important source comes from the Pila-Wilkie theorem itself. In general, it is known that the asymptotic bounds of the Pila-Wilkie theorem can not in general be improved (see e.g. page 2 of \cite{pila2006rational} and the discussion in the introduction of \cite{jones2015improving}). In the general o-minimal case, it isn't even entirely clear what sort of effectivity one has in mind\footnote{i.e. On what quantities associated with the definable set are the constants allowed to depend?} without some additional measure of complexity of the sets or functions involved. 

There are, however, improvements to the Pila-Wilkie theorem in important special cases. For instance, \cite[Conjecture 1.11]{pila2006rational} conjectures that for sets definable in $\m R_{exp}$, the bound from the Pila-Wilkie theorem can be improved from $\mc O (h^{\epsilon})$ to $\mc O ( (\log h )^c )$ where the constant $c$ depends on the definable set. Numerous recent works have concentrated, in special cases, on these improved bounds or making the constants of the Pila-Wilkie theorem effective \cite{binyamini2020point, binyamini2017wilkie, binyamini2019complex, butler2012some, jones2012density, jones2020effective, pila2010counting, pila2006note}; each of these works uses the certain powerful finiteness results for Pfaffian functions (sometimes more or less restrictive cases) to obtain effectiveness where the constants depend on the \emph{complexity} of the Pfaffian chain for the functions defining the set. At the same time, attempts have been made to show that effective results from the Pfaffian setting apply to motivational examples coming from number theory \cite{jones2021pfaffian}. 

In the last section of \cite{binyamini2017wilkie}, Binyamini and Novikov isolate two classes of particular interest in diophantine applications, which may be amenable to their approach: elliptic functions and modular functions. The former is known to be amenable to Pfaffian techniques \cite{jones2021pfaffian, macintyre2008some} and is connected to special points conjectures around the Manin-Mumford conjecture, where the effectiveness results have concrete number theoretic consequences. The latter is connected to various number theoretic problems, e.g. the Andr\'e-Oort conjecture. Of it, Binyamini and Novikov write, 
\begin{displayquote}
The modular category currently appears to be more challenging: we have no reason to believe that the j-function is Pfaffian (or definable from Pfaffian functions).
\end{displayquote}

In recent work, Binyamini gives an effective Pila-Wilkie result for \emph{Noetherian functions} \cite{binyamini2019density} in the o-minimal context, a setting which does include the j-function. The results in the Noetherian setting require a relatively compact domain and yield fewer uniformities than the Pfaffian setting. See \cite{jones2020effective} for some discussion of this point. See also \cite{binyamini2020point} for discussion of the difficulties when leaving the Pfaffian setting; this manuscript shows that for the desired applications to certain diophantine problems, the work was truly necessary as theorems from the Pfaffian setting can not apply. 

In the recent work of Armitage \cite{armitage2020pfaffian}, significant effort is required to obtain effective bounds for the zeros of polynomials involving the j-function on its natural (noncompact) domain. As Armitage mentions, it was previously unknown whether the real and imaginary parts of the j-function are Pfaffian. Our results in this manuscript confirm the suspicions of Binyamini and Novikov; the real and imaginary parts of the $j$-function can not be put into a Pfaffian chain. The main theorem of \cite{freitag2017strong} is the essential input to showing this result: 

\begin{thm} \label{strongj} As a definable set in a differentially closed field, the differential equation satisfied by the $j$-function: 
$$
\left(\frac{y''}{y'}\right)' -\frac{1}{2}\left(\frac{y''}{y'}\right)^2 + (y')^2 \cdot \frac{y^2-1968y+2654208}{y^2(y-1728)^2} = 0
 $$
is strongly minimal. 
\end{thm}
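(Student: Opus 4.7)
The plan is to establish strong minimality by refuting the existence of a proper infinite definable subset of the solution set $X$ in a saturated differentially closed field. I would reduce this analytic-style question to a transcendence statement via the Seidenberg embedding theorem: any counterexample could be realized with solutions that are meromorphic functions on some open subset of the upper half plane $\mathbb{H}$. On $\mathbb{H}$, the general solution of the displayed Schwarzian equation is $\tau \mapsto j(g\tau)$ for $g \in \mathrm{GL}_2^+(\mathbb{R})$, because the left-hand side is exactly the Schwarzian condition characterizing $j$ up to a Möbius reparametrization of the argument, and the rational factor in the coefficient forces the normalization of $j$.

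The key input I would invoke is the Ax--Lindemann--Weierstrass theorem for the $j$-function (Pila), which says that if $g_1,\dots,g_n \in \mathrm{GL}_2^+(\mathbb{R})$ lie in distinct cosets of $\mathrm{GL}_2^+(\mathbb{Q})$, then $j(g_1\tau),\dots,j(g_n\tau)$, together with their first and second derivatives in $\tau$, are algebraically independent over $\mathbb{C}(\tau)$. This transcendence statement pins down the analytic interaction between two generic solutions: either they are tied by a modular correspondence, hence satisfy a Hecke-type algebraic relation, or they are as independent as possible (order $3$ each).

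Combining these ingredients, I would argue as follows. Let $Y \subsetneq X$ be a definable subset over parameters $A \subset K$ with the property that $Y$ is infinite. Choose a sequence of solutions $y_1,y_2,\dots \in Y$ pairwise interalgebraicity-distinct over $A$. Pulling back to the meromorphic setting yields $j(g_i\tau)$ with the $g_i$ in distinct $\mathrm{GL}_2^+(\mathbb{Q})$-cosets; by the transcendence theorem, the $y_i$ and their derivatives form an algebraically independent set over a suitable base. A forking-calculus computation then shows $Y$ must contain the generic type of $X$, forcing $Y = X$ up to finitely many points, contradicting $Y \subsetneq X$. Conversely, the $\mathrm{GL}_2^+(\mathbb{Q})$-orbit of any single solution is countable and definable, explaining where the finite exceptional set in cofiniteness comes from.

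The main obstacle is the passage from the analytic transcendence result, which is a statement about specific meromorphic functions, to a statement about arbitrary definable subsets in a differentially closed field: one must guarantee that the witnessing sequence in $Y$ can actually be realized inside the complex meromorphic model while preserving the defining conditions, and one must handle parameters uniformly so that the Ax--Lindemann input applies after a suitable base change. A secondary difficulty is ruling out small definable subsets arising from Hecke correspondences in a way that matches the ``finite'' in cofinite; here one uses that any two solutions related by a Hecke correspondence are interalgebraic, so such pairs contribute only to the finite exceptional set rather than to an infinite proper definable subset.
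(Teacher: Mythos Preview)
The paper does not supply its own proof of this theorem; it is imported as the main result of \cite{freitag2017strong} (Freitag--Scanlon) and then used as a black box in the subsequent Pfaffian arguments. So there is no in-paper proof to compare against.

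That said, your outline does track the strategy of the cited Freitag--Scanlon paper: realize solutions as $j\circ g$ for $g\in\mathrm{GL}_2^+(\mathbb{R})$ via the Schwarzian description, invoke Pila's Ax--Lindemann--Weierstrass theorem with derivatives, and conclude that no proper infinite definable subset exists. The obstacle you explicitly flag---passing from an analytic transcendence statement about specific meromorphic functions to a statement valid over arbitrary parameter fields---is precisely the substantive content of that paper, and your ``forking-calculus computation'' gestures at it without actually supplying it. In Freitag--Scanlon this step is handled by combining Seidenberg's embedding with a model-theoretic reduction showing it suffices to test the transcendence-degree drop against differential fields generated by other solutions of the same equation; this reduction is exactly what the Remark following the theorem alludes to when it cites \cite{freitag2021bounding} on restricting the class of extensions $F/K$ one must consider. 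One minor confusion in your last paragraph: Hecke correspondences do not furnish ``the finite exceptional set in cofiniteness.'' They are binary algebraic relations between pairs of solutions; for fixed $a$ and fixed level $N$ the set $\{b:\Phi_N(a,b)=0\}$ is a finite $a$-definable subset of $X$, but that is an \emph{instance} of strong minimality, not an explanation of where cofiniteness comes from.
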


\begin{rem}
The equivalent form of the previous theorem that we will use is stated purely in terms of transcendence: a the zero set of a differential equation, $X$, with coefficients in a differential field $K$ is strongly minimal if and only if (1) the equation is irreducible over $K^{alg}$ (as a polynomial in several variables) and (2) given any solution $f$ of $X$ and \emph{any differential field extension\footnote{In proofs, the robustness of this condition is often extremely useful, but we note that there has been significant recent work in showing that this condition follows from the considering only a very particular class of such extensions $F/K$ \cite{freitag2021bounding}.} $F$ of $K$}, $$\text{trdeg}_F \left( F\langle f \rangle \right) =  \text{trdeg}_K \left( K \langle f \rangle \right) \text{ or } 0.$$ 
Here $K \langle f \rangle $ denotes the differential field extension of $K$ generated by $f$. 
\end{rem}

Our main results show that functions which satisfy higher order strongly minimal differential equations can not have real and imaginary parts which algebraic over the elements of any Pfaffian chain. These results are especially pertinent because many of the number theoretic functions to which the Pila-Wilkie theorem has been applied have been shown to be strongly minimal in recent years. For these as well as various other strong minimality results on nonlinear higher order differential equations, see \cite{blazquez2020some, brestovski1989algebraic, casale2020ax, devilbiss2021generic, freitag2017strong, jaoui2019generic, nagloo2017algebraic, poizat1980c}. Besides results for specific equations, as \cite{devilbiss2021generic, jaoui2019generic} indicate, strong minimality is a pervasive condition for nonlinear differential equations of order at least two - it holds \emph{generically} both in the space of constant coefficient equations as well as the space of nonconstant equations. 

The general form of our results is given in Theorem \ref{mainthm}. Following this result we formulate the general problem of determining when a complex analytic function has Pfaffian real and imaginary part using notions from geometric stability theory interpreted in the theory of differentially closed fields.

\section{Pfaffian} 
\begin{defn} Let $f_1, \ldots , f_l $ be real analytic functions on some domain $U \subseteq \m R^n$. We will call $(f_1, \ldots , f_l)$ a \emph{Pfaffian chain} if there are polynomials $p_{ij}(u_1, \ldots , u_n , v_1, \ldots, v_i )$ with coefficients in $\m R$ such that $$\pd{f_i}{x_j}= p_{ij} \left( \bar x, f_1 ( \bar x), \ldots , f_i (\bar x ) \right)$$ 
for $1 \leq i \leq l$ and $1 \leq j \leq n.$ \emph{We call a function Pfaffian} if it can be written as a $\m R$-polynomial with real coefficients in the functions of some Pfaffian chain.
\end{defn} 
We are interested mainly in the connection of the previous notion to certain differential algebraic properties, but the following complex version of the previous notion is more easily connected with our differential algebraic notions: 

\begin{defn} Let $f_1, \ldots , f_l $ be complex analytic functions on some domain $U \subseteq \m C^n$. We will call $(f_1, \ldots , f_l)$ a \emph{$\m C $-Pfaffian chain} if there are polynomials $p_{ij}(u_1, \ldots , u_n , v_1, \ldots, v_i )$ with coefficients in $\m C$ such that $$\pd{f_i}{x_j}= p_{ij} \left( \bar x, f_1 ( \bar x), \ldots , f_i (\bar x ) \right)$$ 
for $1 \leq i \leq l$ and $1 \leq j \leq n.$
We call a function \emph{$\m C$-Pfaffian} if it can be written as a polynomial with $\m C$-polynomial in the functions of some $\m C$-Pfaffian chain.

\end{defn} 
 Our results are insensitive to replacing functions which appear in Pfaffian chains to polynomial (or even algebraic) functions of elements in a Pfaffian chain. 

\section{Not Pfaffian}

\subsection{The $j$-function isn't $\m C$-Pfaffian} We begin by giving a quick and elementary argument for why the the $j$-function can not satisfy a differential equation of the form $j'(z) = f( z, j (z))$ where $f$ is a rational function with coefficients in $\m C.$ Following this, we will show that the $j$-function is not $\m C$-Pfaffian, which generalizes this fact. 

By $SL_2(\m Z)$-invariance, $j(z)=j(z+1).$ It follows also that $j'(z)=j'(z+1)$. Suppose for a moment that there is a rational function with coefficients in $\m C$ so that $j'(z) = f( z, j (z)).$ Now note that $f( z, j (z))=f( z+1, j (z+1)) = f( z+1, j (z)).$ But now since the $j$-function is not algebraic, this equality holds for a generic point $(x,y)$ in affine 2-space, and thus everywhere, so the rational function has the property that $f(x,y)=f(x+1,y)$. This implies that $f$ is only a function of $y$. So, now we have that $$j'(z)=f( j (z) ).$$ But we have that $f(j(z))$ is $SL_2(\m Z)$-invariant, while $j'(z)$ is a quasi-modular form of weight $2.$ That is, $j'(z)$ has the property that if $$\alpha = \begin{pmatrix}
a & b \\
c & d 
\end{pmatrix} \in SL_2 (\m Z)$$
then $j'(\alpha z) = (cz+d)^2 j'(z).$ Thus, it is impossible that $j'(z)=f( j (z) )$. 

Next we argue more generally that the $j$-function can not be $\m C$-Pfaffian using Theorem \ref{strongj}. Let $U \subseteq \m H$ be the standard fundamental domain for the $j$-function: $$U = \left\{ z \in \m H \, : \, |z| \geq 1, -\frac{1}{2}<Re(z) \leq \frac{1}{2}, \text{ and if } Re(z)<0, \text{ then } |z|>1 \right\}$$ 
\begin{thm}
The $j$-function is not algebraic over any $\m C$-Pfaffian chain $(f_1, \ldots , f_l )$ on an open $V \subset U.$
\end{thm}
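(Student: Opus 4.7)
My plan is to exploit the fact that a $\m C$-Pfaffian chain builds a differential field by iterated extensions of transcendence degree at most one, and to play this against the strong-minimality dichotomy from Theorem \ref{strongj}. Suppose for contradiction that $j|_V$ is algebraic over the subfield $L = \m C(z, f_1, \ldots, f_l)$ of the meromorphic functions on $V$. (If the Pfaffian chain lives on a higher-dimensional domain, restrict first to a complex line meeting $V$ and the $\dom(f_i)$ in a one-dimensional analytic set along which $j$ remains nonconstant.) Because $f_i' = p_i(z, f_1, \ldots, f_i) \in L$, the pure field $L$ is already closed under $d/dz$, and the Pfaffian tower $F_0 = \m C(z) \subseteq F_1 = F_0(f_1) \subseteq \cdots \subseteq F_l = L$ is a chain of differential subfields with $\trdeg_{F_{i-1}} F_i \leq 1$.

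Since the $j$-function is not algebraic over $\m C(z)$ (strong minimality of its order-$3$ equation rules out any lower-order algebraic dependence), but is assumed algebraic over $F_l$, there is a smallest index $k$ with $j \in F_k^{\mathrm{alg}}$; then $j \notin F_{k-1}^{\mathrm{alg}}$. Apply Theorem \ref{strongj} and the remark following it with base $K = \m C(z)$ and extension $F = F_{k-1}$: the transcendence degree $\trdeg_{F_{k-1}} F_{k-1}\langle j\rangle$ is either $0$ or $\trdeg_{\m C(z)} \m C(z)\langle j\rangle$. Strong minimality forces the order-$3$ defining polynomial of $j$ to be irreducible over $\m C(z)^{\mathrm{alg}}$, so $j, j', j''$ are algebraically independent over $\m C(z)$ and the latter transcendence degree equals $3$. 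Minimality of $k$ rules out the value $0$, so $\trdeg_{F_{k-1}} F_{k-1}\langle j\rangle = 3$.

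On the other hand, $j$ is algebraic over the differential field $F_k = F_{k-1}(f_k)$, and differentiating its minimal polynomial over $F_k$ repeatedly shows every $j^{(n)}$ lies in $F_k^{\mathrm{alg}}$. Hence $F_{k-1}\langle j\rangle \subseteq F_k^{\mathrm{alg}}$ and
\[
\trdeg_{F_{k-1}} F_{k-1}\langle j\rangle \;\leq\; \trdeg_{F_{k-1}} F_k \;\leq\; 1,
\]
contradicting the previous paragraph. I do not anticipate a serious obstacle in executing this plan: the whole argument is a rigid consequence of the strong-minimality dichotomy once one organises the Pfaffian chain as a tower of order-one differential extensions. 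The only technical points requiring care are the reduction to the one-variable case (so that the single derivation $d/dz$ governs everything) and the standard fact that an algebraic extension of a differential field is canonically a differential field, which is what lets the derivatives of $j$ stay inside $F_k^{\mathrm{alg}}$.
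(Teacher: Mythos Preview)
Your proof is correct and follows essentially the same strategy as the paper: organise the $\m C$-Pfaffian chain as a tower $F_0\subseteq F_1\subseteq\cdots\subseteq F_l$ of differential fields with $\trdeg_{F_{i-1}}F_i\le 1$, locate the first stage at which $j$ becomes algebraic (the paper phrases this as taking a chain of minimal length), and contradict the $0$-or-$3$ dichotomy from Theorem~\ref{strongj}. The only extraneous part is your parenthetical about restricting to a complex line, since in the statement $V$ is already an open subset of $U\subset\m C$.
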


\begin{proof}
Let $(f_1, \ldots , f_l)$ be a Pfaffian chain on $V \subset V$ of minimal length such that $j(z)$ is algebraic over $\m C ( z, f_1, \ldots , f_l ).$ If $j$ is algebraic over $(f_1, \ldots , f_{l-1}),$ we may shorten the chain contradicting the minimality of $l$. So, we can assume that $j(z)$ is transcendental and interalgebraic with $f_l$ over $\m C ( z, f_1, \ldots , f_{l-1} ).$ By the Pfaffian condition, the field $F=\m C (z, f_1, \ldots , f_{k-1})$ is a $\frac{d}{dz}$-differential field. Again, by the Pfaffian condition, $F(f_l)$ is a $\frac{d}{dz}$-differential field and is a transcendence degree one extension of $F$. Since $f_l$ is interalgebraic with $j$, $\td _F (F \langle j \rangle )=1.$ This is impossible by Theorem \ref{strongj}. 
\end{proof}

One can see that there isn't anything special about the $j$-function in the above argument except that it satisfies a higher order strongly minimal differential equation. So, by the same argument as the previous proof: 

\begin{thm} Let $U \subseteq \m C$ be a connected domain and let $f$ be analytic on $U$. If $f(z)$ is not algebraic and satisfies a strongly minimal differential equation of order two or more, then $f(z)$ is not $\m C$-Pfaffian. 
\end{thm}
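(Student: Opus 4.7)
The plan is to adapt the argument given just above for the $j$-function, replacing the appeal to Theorem~\ref{strongj} with the hypothesis that $f$ satisfies a strongly minimal differential equation of order at least two, together with the transcendence-theoretic reformulation of strong minimality stated in the remark.

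First I would argue by contradiction. Suppose $f$ is $\m C$-Pfaffian on some open $V\subseteq U$; then by definition there is a $\m C$-Pfaffian chain $(f_1,\dots,f_l)$ on $V$ such that $f$ is in fact a polynomial, hence algebraic, over $\m C(z,f_1,\dots,f_l)$. Choose such a chain of \emph{minimal} length $l$. As in the proof of the previous theorem, minimality forces $f$ not to be algebraic over $F:=\m C(z,f_1,\dots,f_{l-1})$ (otherwise we could shorten the chain) and hence $f$ and $f_l$ are interalgebraic over $F$. In particular, $f_l$ must be transcendental over $F$, again by minimality. Using the defining identities $\partial f_k/\partial z = p_k(z,f_1,\dots,f_k)$ of the Pfaffian chain, $F$ is closed under $d/dz$, so it is a differential subfield of the field of meromorphic functions on $V$, and the same Pfaffian identity for $k=l$ makes $F(f_l)$ a $d/dz$-differential field of transcendence degree $1$ over $F$.

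Next I would collect the transcendence degree count. Because $f$ is interalgebraic with $f_l$ over $F$, we have $F\langle f\rangle \subseteq \overline{F(f_l)}$, so
\[
\td_F\bigl(F\langle f\rangle\bigr) \;=\; 1.
\]
On the other hand, let $K$ be the differential field of coefficients over which $f$ satisfies its strongly minimal equation of order $n\ge 2$; we may replace $K$ by its compositum with $\m C(z)$ and still have strong minimality, so we may assume $K\subseteq F$. Irreducibility together with order $n\ge 2$ implies $\td_K(K\langle f\rangle)\ge 2$, since $f,f',\dots,f^{(n-1)}$ are algebraically independent over $K$. By the transcendence reformulation of strong minimality in the remark, applied to the field extension $F/K$, we must have
\[
\td_F\bigl(F\langle f\rangle\bigr) \in \bigl\{0,\ \td_K(K\langle f\rangle)\bigr\},
\]
and both options contradict the value $1$ obtained above. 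This completes the proof.

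The only genuinely delicate point is the transcendence degree bookkeeping in the last step: one must ensure that the strongly minimal equation's base field $K$ can be arranged to sit inside $F$ without losing strong minimality, and that $f$, viewed as a meromorphic function on $V$, is a solution that remains transcendental over $F$ (so that the extension $F\langle f\rangle/F$ is nontrivial). Both are straightforward — enlarging $K$ to $K\cdot \m C(z)$ preserves strong minimality by the remark, and if $f$ were algebraic over $F$ the chain $(f_1,\dots,f_{l-1})$ would already witness $f$'s $\m C$-Pfaffianness, contradicting minimality of $l$ — but stating these reductions carefully is the main thing to check.
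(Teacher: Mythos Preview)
Your proof is correct and follows essentially the same route as the paper's own argument, which the paper indicates is ``the same argument as the previous proof'' for the $j$-function: take a $\m C$-Pfaffian chain of minimal length, deduce that $F=\m C(z,f_1,\dots,f_{l-1})$ is a $d/dz$-field with $F(f_l)$ a transcendence-degree-one differential extension, conclude $\td_F(F\langle f\rangle)=1$, and contradict strong minimality of the order-$\ge 2$ equation. Your extra care with the base field $K$ and the observation that $f$ not algebraic forces $\td_K(K\langle f\rangle)=n\ge 2$ are exactly the details the paper leaves implicit; the only minor quibble is that preservation of strong minimality under enlarging the base is a general model-theoretic fact rather than a literal consequence of the remark, but this does not affect the validity of the argument.
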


\subsection{The real and imaginary parts can't be Pfaffian either} 
\begin{thm}
The real and imaginary parts of the $j$-function are not algebraic over any Pfaffian chain $(f_1, \ldots , f_l )$ on an open $V \subset U.$
\end{thm}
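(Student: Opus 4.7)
The plan is to reduce to the preceding theorem by complexifying the real Pfaffian chain and restricting to a horizontal slice of $\m C^2$. Assume for contradiction that $u := \operatorname{Re}(j)$ and $v := \operatorname{Im}(j)$ are algebraic over a Pfaffian chain $(f_1,\ldots,f_l)$ on an open $V \subset U$. Each $f_i$ is real analytic on $V \subset \m R^2$, hence extends uniquely to a holomorphic function $\tilde f_i$ on an open neighborhood $W$ of $V$ in $\m C^2$. By the identity theorem the Pfaffian identities
\[
\pd{f_i}{x_k} = p_{ik}(x_1,x_2,f_1,\ldots,f_i)
\]
persist on $W$ for the $\tilde f_i$ with complex $x_1,x_2$, and likewise any real polynomial relation witnessing algebraicity of $u,v$ over $(x_1,x_2,f_1,\ldots,f_l)$ on $V$ continues to hold on $W$ after replacing $u,v$ by their holomorphic extensions $\tilde u,\tilde v$.

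Next, fix a real $c > 0$ such that the horizontal slice $\{x_2 = c\}$ meets $V$ in a nonempty real open interval $I$, and set $g_i(z) := \tilde f_i(z,c)$ on a complex neighborhood of $I$. Specializing the $k=1$ relation above at $x_2 = c$ yields $g_i'(z) = p_{i1}(z,c,g_1,\ldots,g_i)$, which is a polynomial in $(z,g_1,\ldots,g_i)$ with $\m C$-coefficients; hence $(g_1,\ldots,g_l)$ is a $\m C$-Pfaffian chain in the single complex variable $z$. Because $j(x_1 + i x_2) = u(x_1,x_2) + i v(x_1,x_2)$ on $V$, this identity propagates to $j(x_1 + i x_2) = \tilde u(x_1,x_2) + i \tilde v(x_1,x_2)$ on $W$, so restricting the continued algebraic relations to $x_2 = c$ shows that $h(z) := j(z+ic)$ is algebraic over $\m C(z,g_1,\ldots,g_l)$.

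To conclude, note that the equation in Theorem \ref{strongj} is autonomous in the independent variable, so $h(z) = j(z+ic)$ is a nonalgebraic holomorphic solution to the same strongly minimal differential equation of order greater than one. The preceding theorem therefore forbids $h$ from being algebraic over any $\m C$-Pfaffian chain, contradicting the second paragraph. The one step that requires genuine care is the complexification: one must verify that both the Pfaffian relations and the algebraic witness for $u,v$ extend from the real locus $V$ to an open neighborhood $W \subset \m C^2$, which is immediate from the identity theorem for holomorphic functions since $V$ contains an open real set on which the relevant polynomial identities hold exactly.
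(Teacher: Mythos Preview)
Your argument is correct, but it follows a genuinely different route from the paper's. The paper works entirely over the reals: it takes a minimal-length real Pfaffian chain, observes that the last element is interalgebraic with (say) $\operatorname{Re}(j)$ while $\operatorname{Im}(j)$ already lies in the earlier part of the chain, and then treats $j$ as a map $\m R^2 \to \m R^2$. Using the Cauchy--Riemann equations, $j$ satisfies the same third-order equation with respect to $\partial/\partial x$ as it does with respect to $d/dz$, so the $\partial/\partial x$-differential field generated by $j$ over $F=\m R(x,y,f_1,\ldots,f_{l-1})$ has transcendence degree~$1$, contradicting strong minimality directly.

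Your approach instead complexifies the real chain to holomorphic functions on $\m C^2$, restricts to a slice $x_2=c$ to obtain a one-variable $\m C$-Pfaffian chain, and then reduces to the already-proved $\m C$-Pfaffian theorem applied to the translate $h(z)=j(z+ic)$. This is a clean modular reduction, and the complexification/identity-theorem step is handled carefully. The main cost is that you use the autonomy of the $j$-equation in an essential way to ensure $h$ satisfies the \emph{same} strongly minimal equation; the paper's argument does not need this and therefore generalizes verbatim to equations with coefficients in $\m C(z)$ (as in the subsequent theorems of the section). Your method could be pushed to that generality too, but would require the extra observation that strong minimality is preserved under the translation $z\mapsto z+ic$. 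A minor point: the ``preceding theorem'' as stated concludes only that a solution is not $\m C$-Pfaffian, whereas you need the slightly stronger conclusion \emph{not algebraic over any $\m C$-Pfaffian chain}; this is exactly what the proof of that theorem establishes, so you should cite the proof (or Theorem~3.1) rather than the statement.
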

\begin{proof}
Let $z=(x,y)$ denote the real and imaginary coordinates of the complex number $z$, and let $U$ be the domain as above. Suppose there is a Pfaffian chain of shortest length over which both $Re(j(x+iy))$ and $Im(j(x+iy))$ are algebraic. By the same reduction as in the previous subsection, we assume without loss of generality that the final element of the chain $(f_1, \ldots , f_l )$ is transcendental and interalgebraic with one of $Re(j(x+iy))$ or $Im(j(x+iy))$ over the earlier elements. Assume $f_l$ is interalgebraic with $Re(j(x+iy))$ (a similar argument will apply to the case of $Im(j(x+iy))$). To ease notation, we will simply assume that $f_l = Re(j(x+iy))$ and that $Im(j(x+iy))$ appears earlier in the chain. Again, our argument is not at all sensitive to interalgebraicity. 

Now view the $j$-function as a map $j: U \rightarrow \m R^2.$ We have that $Re(j(x+iy))$ is interdefinable (even in the empty language) with $j(z)$ over $\m R (x, y, f_1, \ldots , f_{l-1})$. Note that the reason for this is very simple - we have that $Im(j(x+iy))$ appears in the chain and $j(z) = \left(Re(j(x+iy)), Im(j(x+iy)) \right).$ By the Pfaffian condition, the field $F= \m R (x, y, f_1, \ldots , f_{l-1})$ is a $\frac{d}{dx}$ and a $\frac{d}{dy}$-field. By the Pfaffian condition, with respect to the derivation\footnote{Note that there is nothing special about the $\frac{d}{dx}$ derivation here - one could equally well use $\frac{d}{dy}$.} $\frac{d}{dx}$,
$$F \langle  Re(j(x+iy)) \rangle = F \left(  Re(j(x+iy)) \right)$$ and as previously, we can assume without loss of generality that this extension is not algebraic. That is: $$\td _F (F \langle  Re(j(x+iy)) \rangle )=1.$$ 

By the interdefinability of $Re(j(x+iy))$ and $j(z)$, we have that the transcendence degree of the $\frac{d}{dx}$-field $F \langle j(z) \rangle$ over $F$ is one. By the $\m C$-analyticity of $j(z)$, we have that $j(z)$ satisfies the same differential equation with respect to $\frac{d}{dx}$ as with respect to $\frac{d}{dz}$. Though we are regarding $j(z)$ as a function $\m R^2 \rightarrow \m R^2$, the differential equation is algebraic, and can be expressed by polynomial functions of the real and imaginary parts and their derivatives with respect to $x$ (the derivatives with respect to $y$ are definable from this by the Cauchy-Riemann equations). So, in the $\frac{d}{dx}$-field generated by $Re(j(x+iy))$ or equivalently $j(z)$, we have the functions $\frac{dj}{dz}$ and $\frac{d^2j}{dz^2}$ as functions $\m R^2 \rightarrow \m R^2$. The differential equation, being algebraic, is expressible by polynomial equalities in the real and imaginary parts of these functions.

We now obtain a contradiction to Theorem \ref{strongj}, since the transcendence degree of the differential field generated by any solution to the differential equation satisfied by the $j$-function must be $0$ or $3$. 
\end{proof}

\subsection{The general strongly minimal case} The analysis of the $j$-function in the previous two subsections used only two properties:
\begin{itemize} 
\item The function is complex analytic on some open connected $U \subseteq \m C$. 
\item The function satisfies an algebraic differential equation\footnote{One can actually assume that the function satisfies an equation and finitely many inequations so that the resulting definable set is strongly minimal. For instance, the system $x\cdot x'' - x' =0, \, x' \neq 0$ is one such system, see \cite{MMP} or \cite{poizat1980c} for a proof.} which is strongly minimal and of order $>1.$   
\end{itemize} 
The argument of the previous subsection then gives:

\begin{thm} Let $f(z)$ be an analytic function on some open connected $U \subseteq \m C$. Let $f(z)$ be a non-algebraic solution to an order $h>1$ algebraic differential equation with coefficients in $\m C(z)$ which is strongly minimal. Then one can not build a Pfaffian chain $(f_1, \ldots , f_l)$ with both $Re(f(z))$ and $Im(f(z))$ algebraic over $\m C (z, f_1, \ldots , f_l ).$  
\end{thm}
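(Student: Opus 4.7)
The plan is to transport the previous subsection's argument verbatim, using the reformulation of strong minimality given in the Remark following Theorem \ref{strongj}.

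First, suppose for contradiction that $(f_1,\dots,f_l)$ is a Pfaffian chain on some open $V$ of \emph{minimal} length $l$ over which both $Re(f(x+iy))$ and $Im(f(x+iy))$ are algebraic. By minimality, the last element $f_l$ must be transcendental over $F := \m R(x,y,f_1,\dots,f_{l-1})$ and interalgebraic over $F$ with one of $Re(f)$ or $Im(f)$; as the argument is insensitive to interalgebraicity, I assume $f_l = Re(f(x+iy))$ and $Im(f(x+iy)) \in F$, the other case being symmetric.

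Second, by the Pfaffian condition $F$ is a $\frac{d}{dx}$-field, and the relation $\pp f_l/\pp x = p_{l1}(x,y,f_1,\dots,f_l)$ gives $F\langle Re(f)\rangle = F(Re(f))$, of transcendence degree one over $F$. Since $Im(f)\in F$, we have $f(z) = Re(f)+i\,Im(f)\in F(i)(Re(f))$, so after adjoining $i$ the $\frac{d}{dx}$-field $F(i)\langle f(z)\rangle$ coincides with $F(i)\langle Re(f)\rangle = F(i)(Re(f))$, still of transcendence degree one over $F(i)$.

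Third, by the Cauchy-Riemann equations and holomorphy of $f$, $\frac{\pp f}{\pp x} = \frac{df}{dz}$, so $f$ satisfies the given order-$h$ strongly minimal equation with respect to $\frac{d}{dx}$; its coefficients live in $\m C(z) \subseteq F(i)$ via $z=x+iy$ and $\pp z/\pp x = 1$. Now apply the Remark's transcendence-degree reformulation of strong minimality to the $\frac{d}{dx}$-differential extension $F(i)/\m C(z)$: the value $\td_{F(i)} F(i)\langle f(z)\rangle$ must equal either $0$ or $h$. Since $f$ is not algebraic and $h>1$, neither possibility matches the value $1$ obtained in the previous step, a contradiction.

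The main bookkeeping---and the likely source of error when writing this out---is the identification of $\m C(z)$ as a $\frac{d}{dx}$-subfield of $F(i)$ and the verification that the algebraic differential equation for $f$ really is the \emph{same} equation in the two derivations $\frac{d}{dz}$ and $\frac{d}{dx}$; both are consequences of holomorphy, used already in the $j$-function proof above. Once those are in hand, replacing the specific Schwarzian of $j$ by an arbitrary higher-order strongly minimal equation is formal.
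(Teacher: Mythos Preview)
Your proposal is correct and follows essentially the same route as the paper: take a minimal-length Pfaffian chain, reduce to the case where the last element is interalgebraic with one of $Re(f)$, $Im(f)$ over the earlier ones, observe that the resulting differential field extension has transcendence degree one, and contradict the $0$-or-$h$ dichotomy from strong minimality using that $\partial f/\partial x = df/dz$ by holomorphy. The only cosmetic difference is that you adjoin $i$ and work with $f(z)$ as an element of $F(i)$, whereas the paper instead regards $j$ (and its derivatives) as maps $\m R^2 \to \m R^2$ and expresses the algebraic differential equation via polynomial relations among the real and imaginary parts; these are equivalent bookkeeping choices, and your version is arguably cleaner.
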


In the theory of differentially closed fields, the definable closure of $a$ over a differential field $F$ is given by $\dcl (a/F) = F \langle a \rangle$, the differential field generated by $a$ over $F$. The algebraic closure, $\acl (a/F) = F \langle a \rangle ^{alg}$ consists of the algebraic closure (in the sense of fields) of the differential field generated by $a$ over $F.$ The next result generalizes the previous theorem in several ways:

\begin{thm} \label{mainthm}
Let $f(z)$ be a complex analytic function on some open connected subset $U \subset \m C$ and $g(z), h(z) \in \acl (f(z) / \m C (z) )$ such that $g(z)$ satisfies a strongly minimal differential equation over $\m C (z) \langle h(z) \rangle $  with respect to $\frac{d}{dz}$ of order larger than one. Then one can not build a Pfaffian chain with both $Re(f(z))$ and $Im(f(z))$ algebraic over the elements in the chain. 
\end{thm}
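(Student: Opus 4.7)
The plan is to adapt the minimal-length Pfaffian chain argument of the previous subsections, enlarging the base to contain $h$ so that strong minimality of $g$ over $\m C(z)\langle h\rangle$ can be invoked. Assume for contradiction there is a Pfaffian chain $(f_1, \ldots, f_l)$ on some open $V \subseteq U$ with $Re(f)$ and $Im(f)$ both algebraic over $\m R(x, y, f_1, \ldots, f_l)$. Complexifying and taking algebraic closure produces the field $L := \overline{\m C(x, y, f_1, \ldots, f_l)}$, which by the Pfaffian condition is closed under both partial derivatives $\frac{\partial}{\partial x}$ and $\frac{\partial}{\partial y}$. The Cauchy--Riemann identity $\frac{\partial^k f}{\partial x^k} = f^{(k)}$ puts all derivatives of $f$ into $L$, and hence every element of $\acl(f/\m C(z))$ into $L$; in particular $g, h \in L$.

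Set $H := \m C(z)\langle h\rangle \subseteq L$, a $\frac{d}{dz}$-differential field whose derivation coincides with $\frac{\partial}{\partial x}$ (its elements being analytic in $z$). I would then construct the tower of algebraically closed $\frac{\partial}{\partial x}$-differential subfields
\[
K_i := \overline{H(x, y, f_1, \ldots, f_i)}, \qquad 0 \le i \le l,
\]
in which each successive extension $K_{i-1} \subset K_i$ has transcendence degree at most one, generated over $K_{i-1}$ by the single element $f_i$ whose partial derivative already sits in $K_i$ by the Pfaffian condition. Since $g \in L = K_l$, there is a least index $m$ with $g \in K_m$. Assuming $m \ge 1$ and setting $F := K_{m-1}$, minimality together with the algebraic closedness of $F$ forces both $f_m$ and $g$ to be transcendental over $F$; since $g$ is algebraic over $F(f_m)$, we have $\trdeg_F F(g) = 1$, and the $\frac{\partial}{\partial x}$-closure of $K_m$ confines the entire $\frac{d}{dz}$-differential field generated by $g$ to $K_m$, giving $\trdeg_F F\langle g\rangle_{d/dz} = 1$. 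But $F$ is a $\frac{d}{dz}$-differential field extension of $H$, over which $g$ satisfies a strongly minimal equation of order $n > 1$, so by the reformulation of strong minimality recalled after Theorem \ref{strongj}, this transcendence degree must lie in $\{0, n\}$---a contradiction.

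The main obstacle is the base case $m = 0$, in which $g \in K_0 = \overline{H(x)}$ (note $y \in H(x)$, since $z \in H$). To dispose of this, I would establish the auxiliary lemma that a function analytic in $z$ and algebraic over $H(x)$ is already algebraic over $H$. Substituting $x = (z + \bar z)/2$ turns an algebraic relation for $g$ over $H(x)$ into an identity of the form $\sum_m r_m(z) \bar z^m \equiv 0$ on $V$, with each $r_m \in H[g]$ holomorphic in $z$; iteratively applying the Wirtinger derivative $\frac{\partial}{\partial \bar z}$, which annihilates the $r_m$ and $g$ by analyticity, peels off successive powers of $\bar z$ and forces every $r_m$ to vanish identically, producing a nontrivial polynomial relation for $g$ over $H$. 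Thus $m = 0$ would force $g$ to be algebraic over $H$, contrary to the natural reading of the hypothesis that $g$ is a non-algebraic (generic) solution of its strongly minimal equation of order $n > 1$.
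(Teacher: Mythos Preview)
Your argument is correct and follows the same strategy as the paper: reduce to a transcendence-degree-one step in a Pfaffian tower over a base containing $H=\m C(z)\langle h\rangle$, then contradict strong minimality of the equation for $g$. The paper's proof is much terser---it simply observes that if $Re(f),Im(f)$ are algebraic over a Pfaffian chain then so is every element of $\acl(f/\m C(z))$, inserts $h$ into the chain, and invokes the previous theorem with $\m C(z)$ replaced by $\m C(z)\langle h\rangle$---whereas you spell out the complexification, the tower $K_i$, the identification of $\partial/\partial x$ with $d/dz$ on holomorphic elements, and the base case $m=0$ (none of which the paper makes explicit).
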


\begin{proof}
The previous result is insensitive to replacing $f(z)$ with some function which is algebraic over $\m C (z, f(z))$, because if $f(z)$ is algebraic over $\m C (z, f_1, \ldots , f_l ),$ then so is any element of $\acl (f(z) /\m C(z)).$ Similarly, if $h(z) \in \acl (f(z) / \m C(z) )$, then if $h(z)$ is not Pfaffian, then $f(z)$ can not be Pfaffian. Since $h(z)$ is Pfaffian, we can assume that $h(z)$ appears earlier in the chain. Then the previous arguments apply with $\m C(z) $ replaced by $\m C(z) \langle h(z) \rangle$. 
\end{proof}

\begin{rem}
The previous theorem resolves negatively Question 1 of \cite{Bianconi2016Some}. For instance, the real and imaginary parts of the $j$-function are Noetherian, but on any open domain in $\m C$, these functions can not appear in a Pfaffian chain, by the previous result. 
\end{rem}

\subsection{Minimal analysis} \label{gst}
Even the condition of strong minimality can be slightly weakened - for instance we could instead demand that $g(z)$ is a generic solution of a differential equation $X$ over $\m C (z) \langle h(z) \rangle $ which has Lascar rank one and order larger than one. It is also sufficient to merely demand that $g(z)$ is almost internal to a Lascar rank\footnote{Lascar rank is notion also coming from model theory which has a concrete interpretation in differential fields in terms of transcendence (see e.g. \cite{MMP}). Lascar rank is bounded by Morley rank (in general), but the two can differ \cite{hrushovski1999lascar}.} one type of order greater than one.

Theorem \ref{mainthm} might appear as an extremely special case not likely to be close to contributing to a general characterization of when a function which satisfies an algebraic differential equation has real and imaginary parts which are Pfaffian (similarly $\m C$-Pfaffian). But in this subsection, we will explain why the result is perhaps closer than one might expect.

We begin by giving some definitions from geometric stability theory \cite{GST}, and note that as before we are working in the theory of differentially closed fields of characteristic zero. Throughout the subsection, $p = \tp(a/A)$ will be assumed to be a stationary type. We say that a type is \emph{semiminimal} if it is \emph{almost internal} to a type of Lascar rank $1$. Recall that a stationary type $p = \tp (a/ A)$ is \emph{almost internal} to a minimal type $q$ over $B \supset A$ such that $a \ind _A B$ and there is a sequence $(d_1, \ldots , d_n )$ of $B$-independent realizations of $q$ such that $\acl (Ba) = \acl (B d_1 \ldots d_n )$. The following is a well-known notion from geometric stability theory \cite{moosa2014some}.

\begin{defn}
An \emph{semiminimal analysis of $p = \tp(a/A)$} is a sequence $(a_0, \ldots , a_n)$ such that 
\begin{itemize}
\item $a$ is interdefinable with $a_n$ over $A$, 
\item for each $i$, $a_i \in \dcl (A, a_{i+1} )$,
\item for each $i$, $\tp (a_{i+1} /A a_i)$ is \emph{semiminimal}. 
\end{itemize} 
\end{defn}

Every finite rank type has a semiminimal analysis, and it easily follows from Theorem \ref{mainthm} that if in some analysis of $p=\tp (a/ \m C(z) )$, $(a_0, \ldots, a_n )$, we have a type $tp(a_{i+1} / \m C(z) \langle a_i \rangle )$ which is internal to a strongly minimal type which has order greater than one, then the real and imaginary parts of $a$ are not Pfaffian. Due to the inductive nature of the definition of Pfaffian functions, the following problem can be seen to reduce to the special case of semiminimal types almost internal to types satisfying order one differential equations over differential fields generated by Pfaffian functions. 

\begin{problem} \label{open}
Formulate in differential algebraic terms, necessary and sufficient conditions for the real and imaginary parts of a complex analytic function $f(z)$ to be Pfaffian in terms of the semiminimal analysis of $f(z)$. 
\end{problem}

One can see that the problem must involve in some essential way the differential algebraic equations satisfied by the real and imaginary parts on the given domain, or directly on the domain of complex analytic function. After all, a nontrivial solution to $f'=f$ (a $\m C$-multiple of $e^z$) can not have Pfaffian real and imaginary part on all of $\m C$ (see e.g. \cite{macintyre2008some}). Restricting the domain to those imaginary values in $(-\pi , \pi)$, the complex exponential has real and imaginary parts which are polynomial over the real exponential and restricted trignometric functions, all of which are Pfaffian. The higher order strongly minimal case analyzed above is of a much different nature. For instance, it is robust under domain changes - one can see that the $j$-function can not have its real and imaginary parts in a Pfaffian chain even when restricting to any open subset of the upper half plane.

\begin{defn}
The type $\tp (a /A)$ \emph{admits no proper fibrations} if whenever $c \in \dcl (Aa) \setminus \acl (A)$, we must have $a \in \acl (Ac )$. Minimal types admit no proper fibrations, but there are other (semiminimal) examples, e.g. \cite[example 2.2]{moosa2014some}.
\end{defn}

\begin{prop} \label{moose} \cite[Proposition 2.3]{moosa2014some} Suppose that the stationary type $p = \tp (a/A)$ admits no proper fibrations. Then $p$ is semiminimal and one of two options occurs: 
\begin{enumerate}
    \item $p$ is almost internal to a non locally modular minimal type. 
    \item $a$ is interalgebraic over $A$ with a finite tuple of independent realizations of a locally modular minimal type over $A$. 
\end{enumerate} 
\end{prop}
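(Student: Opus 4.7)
My plan is a two-step reduction. First, I would extract semiminimality of $p$ from its semiminimal analysis together with the no-proper-fibrations hypothesis; then I would apply the dichotomy between locally modular and non locally modular minimal types, using local modularity to descend parameters in the modular case.

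For Step~1, in our finite-rank, totally transcendental setting $p$ admits a semiminimal analysis $(a_0,\dots,a_n = a)$ as in the definition recalled above. Assuming $a \notin \acl(A)$ (else the conclusion is vacuous), let $i$ be least with $a_i \notin \acl(A)$; then $c := a_i \in \dcl(Aa)\setminus\acl(A)$, and $\tp(c/A)$ is semiminimal since all earlier terms lie in $\acl(A)$. The no-proper-fibrations hypothesis forces $a \in \acl(Ac)$, so $\tp(a/A)$ is interalgebraic with the semiminimal type $\tp(c/A)$ and is therefore itself semiminimal. Accordingly, fix a minimal type $q$ defined over a parameter set $B \supseteq A$ with $a \ind_A B$ and $B$-independent realizations $d_1,\dots,d_n$ of $q$ such that $\acl(Ba)=\acl(Bd_1\cdots d_n)$.

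For Step~2, if $q$ is not locally modular then conclusion~(1) follows immediately. Suppose instead that $q$ is locally modular, and replace it by an appropriate non-forking extension so that $q$ is modular over $B$. In a totally transcendental theory, locally modular minimal types are one-based, so canonical bases are computable inside the modular pregeometry of $q$. In particular, $\operatorname{Cb}(\operatorname{stp}(\bar d / A))$ can be shown to be interalgebraic over $A$ with a tuple in $\acl(Aa)$, permitting replacement of $B$ by a subset $B' \subseteq \acl(Aa)$. After adjusting $\bar d$ correspondingly, one obtains a locally modular minimal type $q'$ defined over $A$ together with $A$-independent realizations $e_1,\dots,e_m \in \acl(Aa)$ of $q'$ satisfying $a \in \acl(Ae_1\cdots e_m)$. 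That no parameters are wasted --- i.e.\ that no proper sub-tuple of the $e_j$ already algebraically generates $a$ over $A$ --- follows from the no-proper-fibrations hypothesis together with a modular-dimension count, which yields conclusion~(2).

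The main obstacle is the descent in Step~2: obtaining the locally modular minimal type $q'$ defined over $A$ itself, rather than merely over the extension $B$, is precisely what local modularity, via one-basedness and canonical-base control, buys, and it genuinely fails without local modularity --- which is why conclusion~(1) can only assert almost internality. Making the descent rigorous calls for careful canonical-base bookkeeping inside the modular pregeometry determined by $q$, and this is the step where I would follow Moosa and Pillay's original argument verbatim rather than attempt to reinvent it.
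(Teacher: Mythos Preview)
The paper does not supply its own proof of this proposition: it is quoted verbatim as \cite[Proposition~2.3]{moosa2014some} and used as a black box in the subsequent discussion of semiminimal analyses. So there is no in-paper argument against which to compare your proposal.

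That said, your outline is a faithful sketch of the standard argument one finds in the cited source. Your Step~1 is correct: picking the first $a_i$ outside $\acl(A)$ and invoking the no-proper-fibrations hypothesis does force $a$ into $\acl(Aa_i)$, whence semiminimality of $p$ follows (you are tacitly assuming $a_0 \in \acl(A)$, which is the intended convention even though the paper's stated definition does not make it explicit). Your Step~2 correctly identifies the dichotomy and the real work: in the locally modular case, one-basedness lets you pull the canonical base of $\bar d$ over $Aa$ down into $\acl(Aa)$, which is exactly how the parameters descend from $B$ to $A$. Your candour that you would follow Moosa's argument verbatim for the canonical-base bookkeeping is appropriate; that step is where the content lies, and your sketch of it (replacing $B$ by $B' \subseteq \acl(Aa)$ via $\operatorname{Cb}$) is the right shape but would need the details filled in to stand alone. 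The final remark about no proper fibrations ensuring no sub-tuple suffices is slightly off: once you have mutual interalgebraicity of $a$ with an independent tuple of realizations, you can simply pass to a maximal independent sub-tuple without invoking the hypothesis again.
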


By refining via fibrations, any finite rank type has a semiminimal analysis $a_0, \ldots , a_n$ in the above sense which is also \emph{reduced} in the sense that for each $i$, $\tp (a_{i+1} /A a_i)$ admits no proper fibrations. Then Problem \ref{open} reduces to the following question with two distinct subcases:

\begin{question}
Let $f(z)$ be a realization of a type $p$ which is $\m C$-analytic on some domain $U \subset \m C.$\footnote{There is always such a realization, by Seidenberg's embedding theorem \cite{seidenberg1958abstract}.} Suppose that $p$ satisfies one of the following:

\begin{enumerate}
    \item Let $p$ be the generic type of some order one differential equation which is internal to the constants.  
    \item Suppose that $p$ does not admit proper fibrations and is interalgebraic with a number of realizations of a locally modular type of order one. The order one type is known to be trivial and $\aleph _0$-categorical \cite{freitag2017finiteness}.  
\end{enumerate}
When is $f(z)$ Pfaffian on $U$? When is $f(z)$ Pfaffian on some open $V \subset U$?
\end{question}

\bibliography{research}{}
\bibliographystyle{plain}

\end{document}